     \def\section{\@startsection{section}{1}%
     \z@{.7\linespacing\@plus\linespacing}{.5\linespacing}%
     {\bfseries
     \centering
     }}
     \def\@secnumfont{\bfseries}
\newtheorem{theorem}{Theorem}[section]
\newtheorem{corollary}{Corollary}[theorem]
\theoremstyle{definition}
\newtheorem{remark}{Remark}[section]
\numberwithin{equation}{section}
\begin{document}

\title{An Inversion Formula for the Gaussian Radon Transform for Banach Spaces}

\author{Irina Holmes}
\address{Department of Mathematics \\
 Louisiana State University \\
Baton Rouge, LA 70803 \\
e-mail: \sl irina.c.holmes@gmail.com}
\thanks{Research of I. Holmes is supported by the US Department of Education GAANN grant P200A100080} 

\subjclass[2000]{Primary 44A12, Secondary 28C20, 60H40}

\begin{abstract}
We prove a disintegration theorem for the Gaussian Radon transform $Gf$ on Banach spaces and use the Segal-Bargmann transform on abstract Wiener spaces to find a procedure to obtain $f$ from its Gaussian Radon transform $Gf$. 
\end{abstract}

\keywords{Gaussian Radon transform; abstract Wiener space; Segal-Bargmann transform	.}

\maketitle

\section{Introduction} \label{S:intro}

In this paper we work with an abstract Wiener space $(H, B, \mu)$, where $(H, \|\cdot\|)$ is a real separable Hilbert space, $B$ is the Banach space obtained by completing $H$ with respect to a measurable norm $|\cdot|$, and $\mu$ is Wiener measure on $B$. We recall here that for every $x^* \in B^*$, the restriction $x^*|_H$ is continuous with respect to the Hilbert norm $\|\cdot\|$ on $H$. Consequently, $B^*$ may be continuously embedded as a dense subspace of $H$ through the linear map:
	\begin{equation} \label{E:dual1}
	B^* \rightarrow H \text{; } x^* \mapsto h_{x^*}
	\end{equation}
where $h_{x^*}$ is, for every $x^* \in B^*$, the unique element of $H$ such that:
	\begin{equation} \label{E:dual2}
	\left< h_{x^*}, \cdot \right> = x^*|_H.
	\end{equation}
We will let $H_{B^*}$ denote the image of this map in $H$:
	\begin{equation}
	H_{B^*} = \left\{ h_{x^*} \in H : x^* \in B^* \right\}.
	\end{equation}
With this notation, Wiener measure $\mu$ is the Gaussian measure on $B$ given by:
	\begin{equation} \label{E:AWS}
	\int_B e^{i(x, x^*)}\,d\mu(x) = e^{-\frac{1}{2}\left\|h_{x^*}\right\|^2}
	\end{equation}
for all $x^* \in B^*$, where $(x, x^*)$ denotes the natural pairing $B$ - $B^*$ for all $x \in B$ and $x^* \in B^*$. For more details on abstract Wiener spaces, see \cite{Kuo1}.

The linear map $H_{B^*} \rightarrow L^2(B,\mu)$; $h_{x^*} \mapsto (\cdot, x^*)$ is $\|\cdot\|$-continuous and since $H_{B^*}$ is dense in $H$, this map has a unique extension to $H$, which we denote:
	\begin{equation} \label{E:pwin}
	I : H \rightarrow L^2(B, \mu) \text{; } h \mapsto Ih.
	\end{equation}
This map was first introduced by L. Gross in \cite{Gr} and is sometimes referred to as the ``Paley-Wiener map''. It is an isometry:
	\begin{equation} \label{E:iso}
	\left< Ih, Ik \right>_{L^2(B, \mu)} = \left< h, k \right>
	\end{equation}
for all $h, k \in H$, where $\left<\cdot,\cdot\right>$ denotes the inner product on $H$. Moreover, if $h \in H$, then any representative $\widetilde{h}$ of $Ih$ is a centered Gaussian random variable with variance $\|h\|^2$. Usually the map in \ref{E:pwin} is simply denoted $h \mapsto \widetilde{h}$, but some measure-theoretic technicalities arising in Section \ref{S:Disint} will require us to be a little careful about the true quotient space structure of $L^2$-spaces.

Although largely self-contained, this work is based on the results in \cite{HSen}, where an infinite-dimensional version of the Radon transform was developed for Banach spaces in the abstract Wiener space setting. In the absence of a useful version of Lebesgue measure on infinite-dimensional spaces, we constructed Gaussian measures on $B$ which are concentrated on hyperplanes and, more generally, on $B$-closures of closed affine subspaces of $H$. This result and some needed consequences are presented in Section \ref{S:GRT}. In Section \ref{S:Disint} we prove a disintegration of Wiener measure provided by these measures on closed affine subspaces and some ensuing corollaries. Among them, we establish the precise relationship between the Gaussian Radon transform $Gf$ and conditional expectation. These results are then used in conjunction with the Segal-Bargmann transform in Section \ref{S:Inversion} to prove an inversion theorem for $Gf$.

\section{The Gaussian Radon Transform} \label{S:GRT}

The following result was proved in Theorem 2.1 of \cite{HSen}:

\begin{theorem} \label{T:mump}
Let $(H, B, \mu)$ be an abstract Wiener space, $M_0$ be a closed subspace of $H$ and $\overline{M_0}$ denote the closure of $M_0$ in $B$. For every $p \in M_0^{\perp}$, there is a unique measure $\mu_{M_p}$ on $B$ such that:
	\begin{equation} \label{E:mumpchar}
	\int_B e^{i(x, x^*)}\,d\mu_{M_p}(x) = e^{i(p, x^*) - \frac{1}{2}\left\|P_{M_0}h_{x^*}\right\|^2}
	\end{equation}
for all $x^* \in B^*$, where $P_{M_0}$ denotes the orthogonal projection of $H$ onto $M_0$. Moreover, the measure $\mu_{M_p}$ is concentrated on $\overline{M_p} = p + \overline{M_0}$:
	\begin{equation} \label{E:mumpconc}
	\mu_{M_p}\left( \overline{M_p} \right) = 1.
	\end{equation}
\end{theorem}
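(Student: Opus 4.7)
The plan is to construct $\mu_{M_p}$ as the push-forward, under the translation $T_p : x \mapsto x + p$, of the Wiener measure on the smaller abstract Wiener space associated to $M_0$. Uniqueness is automatic: characteristic functions determine Borel probability measures on the separable Banach space $B$, since the characters $\{e^{i(\cdot, x^*)} : x^* \in B^*\}$ generate the Borel $\sigma$-algebra.

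For existence, I first observe that $(M_0, \overline{M_0}, \mu_0)$ is itself an abstract Wiener space: the restriction of the measurable norm $|\cdot|$ to the closed Hilbert subspace $M_0$ of $H$ remains a measurable norm in the sense of Gross (a standard fact from \cite{Kuo1}), and the $|\cdot|$-completion of $M_0$ is precisely $\overline{M_0}$. Let $\mu_0$ denote the associated Wiener measure on $\overline{M_0}$, extended by zero to a Borel probability measure on $B$ concentrated on the Borel set $\overline{M_0}$. Since $p \in M_0^{\perp} \subset H \subset B$, the map $T_p$ is well-defined on $B$, and I define $\mu_{M_p} := (T_p)_{*}\mu_0$. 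This measure is concentrated on $p + \overline{M_0} = \overline{M_p}$, giving (\ref{E:mumpconc}). A change of variables, together with (\ref{E:AWS}) applied to the abstract Wiener space $(M_0, \overline{M_0}, \mu_0)$ at $y^* := x^*|_{\overline{M_0}} \in \overline{M_0}^*$, then yields
\begin{equation}
\int_B e^{i(x, x^*)}\,d\mu_{M_p}(x) = e^{i(p, x^*)} \int_{\overline{M_0}} e^{i(y, y^*)}\,d\mu_0(y) = e^{i(p, x^*) - \frac{1}{2}\|h'_{y^*}\|^2},
\end{equation}
where $h'_{y^*} \in M_0$ is the element satisfying $\langle h'_{y^*}, m \rangle = y^*(m)$ for all $m \in M_0$. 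But for such $m$, $y^*(m) = x^*(m) = \langle h_{x^*}, m \rangle = \langle P_{M_0} h_{x^*}, m \rangle$, so $h'_{y^*} = P_{M_0} h_{x^*}$, which matches (\ref{E:mumpchar}) exactly.

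The main obstacle I anticipate is the foundational claim that the restriction of $|\cdot|$ to $M_0$ is still a measurable norm; this requires unpacking Gross's definition via finite-dimensional $\varepsilon$-sets and checking compatibility between orthogonal projections in $H$ and those in $M_0$. Once this is in hand, the only delicate point is the small piece of duality that identifies $h'_{y^*}$ with $P_{M_0} h_{x^*}$, after which the proof reduces to the direct computation above.
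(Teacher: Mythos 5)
This paper does not actually prove Theorem \ref{T:mump}; it imports it as Theorem 2.1 of \cite{HSen}, so there is no in-text proof to compare against. Your proposal is nonetheless a correct and essentially standard route to the result, and it is in the same spirit as the cited source: the whole content of the theorem is that $(M_0, \overline{M_0})$, with the restriction of the measurable norm, is again an abstract Wiener space (this is exactly observation (ii) that the paper records after the theorem statement), after which $\mu_{M_p}$ is the translate by $p$ of the Wiener measure of that sub-space. You correctly identify the one load-bearing step, namely that $|\cdot|$ restricted to $M_0$ is still a measurable norm; this is true and the argument is the one you hint at: given $\varepsilon$ and the finite-dimensional projection $P_0$ of $H$ with range $F_0$ furnished by measurability of $|\cdot|$ on $H$, take $Q_0$ to be the projection of $M_0$ onto $P_{M_0}(F_0)$; any finite-dimensional projection $Q$ of $M_0$ orthogonal to $Q_0$ has range orthogonal to $F_0$ in $H$, and the canonical Gaussian cylinder measures of $M_0$ and $H$ agree on cylinder sets based on subspaces of $M_0$, so the $\varepsilon$-estimate transfers. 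The remaining steps are all sound: $\overline{M_0}$ is closed, hence Borel, in $B$, so extending $\mu_0$ by zero is unproblematic; the duality computation $h'_{y^*} = P_{M_0}h_{x^*}$ is exactly right; uniqueness follows since characteristic functionals determine Borel probability measures on a separable Banach space; and the translation argument gives both \ref{E:mumpchar} and \ref{E:mumpconc}. The only caveat is presentational: you should either cite a precise reference for the measurability of the restricted norm or include the two-line projection argument above, since ``standard fact'' is doing real work there.
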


We make a few observations about these measures.

\begin{enumerate}[(i)]

\item The measures defined above, though concentrated on different closed subspaces or translates of closed subspaces, are all defined on the same space $B$, which facilitates calculations involving more than one of these measures. In particular, if $M_0$ is a closed subspace of $H$ and $p \in M_0^{\perp}$, then:
	\begin{equation} \label{E:mumpmumorel}
	\int_B f(x) \,d\mu_{M_p}(x) = \int_B f(x + p) \,d\mu_{M_0}(x)
	\end{equation}
for all Borel functions $f$ on $B$ such that either side exists.

\item The space $(M_0, \overline{M_0}, \mu_{M_0})$ is itself an abstract Wiener space, where we are considering $\mu_{M_0}$ restricted to the Borel $\sigma$-algebra of $\overline{M_0}$. As a measure on $B$, $\mu_{M_0}$ is a centered degenerate Gaussian measure, since its covariance operator:
	\begin{equation} \label{E: mumocovop}
	q : B^* \times B^* \rightarrow \mathbb{R} \text{; } q\left( x^*, y^* \right) = \left< P_{M_0}x^*, P_{M_0}y^* \right> \text{, for all } x^*, y^* \in B^*
	\end{equation}
is not strictly positive definite. However, since $\overline{M_0}^*$ consists exactly of restrictions $x^*|_{\overline{M_0}}$ of elements $x^* \in B^*$, the restriction $q|_{\overline{M_0}^* \times \overline{M_0}^*}$ is strictly positive definite, and $\mu_{M_0}$ is non-degenerate on $\overline{M_0}$. It can also be easily shown that $M_0$ is indeed the Cameron-Martin space of $(\overline{M_0}, \mu_{M_0})$.

\item If $x^* \in B^*$ is $0$ on $M_0$, then $h_{x^*} \in M_0^{\perp}$ and $P_{M_0}h_{x^*} = 0$. The formula in \ref{E:mumpchar} then becomes:
	\begin{equation}
	\int_B e^{i(x, x^*)}\,d\mu_{M_p}(x) = e^{i(p, x^*)}
	\end{equation}
and, since $p \in M_0^{\perp} \subset H$, we have:
	\begin{equation}
	(p, x^*) = \left< p, h_{x^*} \right>.
	\end{equation}
Therefore:
	\begin{equation} \label{E:mumpae}
	(x, x^*) = \left<p, h_{x^*} \right> \text{ for } \mu_{M_p} \text{- almost all } x \in B
	\end{equation}
for all $x^* \in B^*$ such that $x^*|_{M_0} = 0$. Every $x^* \in B^*$ is then a (possibly degenerate) Gaussian random variable on $(B, \mu_{M_p})$, with mean $\left<p, h_{x^*}\right>$ and variance $\left\|P_{M_0}h_{x^*}\right\|^2$. 

\item If $M_0$ is a \textit{finite-dimensional} subspace of $H$, then $\mu_{M_0}$ is simply the standard Gaussian measure $\gamma_{M_0}$ on $\overline{M_0} = M_0$. To see this, note that from \ref{E:mumpchar}:
	\begin{equation}
	\int_{M_0} e^{i\left< h, P_{M_0}h_{x^*} \right>}\,d\mu_{M_0}(h) = e^{-\frac{1}{2}\left\|P_{M_0}h_{x^*}\right\|^2}
	\end{equation}
for all $x^* \in B^*$. Since $\left\{P_{M_0}h_{x^*} : x^* \in B^* \right\}$ is dense in $(M_0, \|\cdot\|)$, it follows by an application of the Lebesgue dominated convergence theorem that:
	\begin{equation}
	\int_{M_0} e^{i\left< h, k \right>}\,d\mu_{M_0}(h) = e^{-\frac{1}{2}\|k\|^2}
	\end{equation}
for all $k \in M_0$.

\item As in the classical case, the map:
	\begin{equation} \label{E:pwmapnotext}
	H_{B^*} \rightarrow L^2(B, \mu_{M_p}) \text{; } h_{x^*} \mapsto \left(\cdot, x^*\right)
	\end{equation}
is continuous with respect to the Hilbert norm $\|\cdot\|$ for every $p \in M_0^{\perp}$:
	\begin{eqnarray*}
	\left\| (\cdot, x^*) \right\|^2_{L^2(B, \mu_{M_p})} &=& \left< p, h_{x^*} \right>^2 + \left\| P_{M_0}h_{x^*} \right\|^2\\
	&\leq& \left( \|p\|^2 + 1 \right) \|h_{x^*}\|^2.
	\end{eqnarray*}
We denote the extension of this map to $H$ by:
	\begin{equation} \label{E:Imp}
	I_{M_p} : H \rightarrow L^2(B, \mu_{M_p}) \text{; } h \mapsto I_{M_p}h \text{, for all } p \in M_0^{\perp}.
	\end{equation}
If $h \in H$, then any representative $h_{M_p}$ of $I_{M_p}h$ is, with respect to $\mu_{M_p}$, Gaussian with mean $\left< p, h \right>$ and variance $\left\|P_{M_0}h\right\|^2$:
	\begin{equation} \label{E:hMpGauss}
	\int_B e^{ih_{M_p}} \,d\mu_{M_p} = e^{i\left< p, h \right> - \frac{1}{2}\left\| P_{M_0}h \right\|^2}.
	\end{equation}
However, unlike the classical case, $I_{M_p}$ is not an isometry:
	\begin{equation}
	\left< I_{M_p}h, I_{M_p}k \right>_{L^2(B, \mu_{M_p})} = \left< P_{M_0}h, P_{M_0}k \right>
	\end{equation}
for all $h, k \in H$.

\end{enumerate}

By a \textit{hyperplane} in a topological vector space we mean a translate of a closed subspace of codimension $1$. Suppose $P = au + u^{\perp}$ is a hyperplane in $H$, where $a \in \mathbb{R}$ and $u$ is a unit vector. As shown in \cite{HSen}, if $\left<\cdot, u\right>$ is continuous on $H$ with respect to the $B$-norm $|\cdot|$, then the $B$-closure $\overline{P}$ of $P$ in $B$ is a hyperplane in $B$ and, in fact, every hyperplane in $B$ is of this form. However if $\left<\cdot, u\right>$ is \textit{not} continuous with respect to $|\cdot|$, then $\overline{P} = B$ in $B$. The Gaussian Radon transform of a Borel function $f : B \rightarrow \mathbb{R}$ is defined on the set of all hyperplanes $P$ in $H$ as:
	\begin{equation} \label{E:gaussradon}
	Gf(P) \stackrel{\text{def}}{=} \int_B f \,d\mu_P
	\end{equation}
where $\mu_P$ is the Gaussian measure on $B$ concentrated on $\overline{P}$, as described in Theorem \ref{T:mump}.

\section{A Disintegration of Wiener Measure} \label{S:Disint}

\begin{theorem} \label{T:Disint}
Let $(H, B, \mu)$ be an abstract Wiener space and $Q_0$ be a closed subspace of $H$ with finite codimension. Then the map:
	\begin{equation} \label{E:GQ0}
	 Q_0^{\perp} \ni p \mapsto G_{Q_0}f(p) \stackrel{\text{def}}{=} \int_B f \,d\mu_{Q_p} 
	\end{equation}
is Borel measurable on $Q_0^{\perp}$ for all non-negative Borel functions $f$ on $B$. Moreover:
	\begin{equation} \label{E:disintformula}
	\int_B f\,d\mu = \int_{Q_0^{\perp}} \left( \int_B f \,d\mu_{Q_p} \right) \,d\gamma_{Q_0^{\perp}}(p)
	\end{equation}
for all Borel functions $f : B \rightarrow \mathbb{C}$ for which the left side exists, where $\gamma_{Q_0^{\perp}}$ denotes standard Gaussian measure on $Q_0^{\perp}$.
\end{theorem}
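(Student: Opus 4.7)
The plan is to reduce the problem to characters $f_{x^*}(x) = e^{i(x,x^*)}$ for $x^* \in B^*$, verify both sides of \ref{E:disintformula} agree on these by direct computation with \ref{E:mumpchar}, and then extend to general Borel $f$ by uniqueness of Borel measures on $B$ determined by their characters. Measurability of $p \mapsto G_{Q_0}f(p)$ will be handled in parallel via a Dynkin class argument on cylinder sets.

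For the character computation, Theorem \ref{T:mump} gives
$$G_{Q_0}f_{x^*}(p) = e^{i(p,x^*) - \frac{1}{2}\|P_{Q_0}h_{x^*}\|^2} \quad (p \in Q_0^\perp).$$
Since $p \in Q_0^\perp \subset H$, observation (iii) yields $(p,x^*) = \langle p, h_{x^*}\rangle = \langle p, P_{Q_0^\perp} h_{x^*}\rangle$, so $p \mapsto G_{Q_0}f_{x^*}(p)$ is continuous. Integrating against the standard Gaussian $\gamma_{Q_0^\perp}$ on the finite-dimensional Hilbert space $Q_0^\perp$ (whose characteristic function is $v \mapsto e^{-\|v\|^2/2}$) gives
$$\int_{Q_0^\perp} G_{Q_0}f_{x^*}(p)\,d\gamma_{Q_0^\perp}(p) = e^{-\frac{1}{2}\|P_{Q_0^\perp}h_{x^*}\|^2}\,e^{-\frac{1}{2}\|P_{Q_0}h_{x^*}\|^2} = e^{-\frac{1}{2}\|h_{x^*}\|^2},$$
which by \ref{E:AWS} equals $\int_B f_{x^*}\,d\mu$. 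Thus \ref{E:disintformula} holds on characters.

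For measurability in full generality, I would extend observation (iii): under $\mu_{Q_p}$ the random vector $((\cdot,x_1^*),\dots,(\cdot,x_n^*))$ is Gaussian with mean depending continuously on $p$ and covariance independent of $p$. Hence $p \mapsto \mu_{Q_p}(C)$ is continuous for every cylinder $C = \{x \in B : ((x,x_i^*))_{i=1}^n \in R\}$ with $R \subset \mathbb{R}^n$ Borel. Since these cylinders form a $\pi$-system generating the Borel $\sigma$-algebra of $B$ (using separability of $B$ together with Hahn--Banach), the Dynkin class $\{A \subset B \text{ Borel} : p \mapsto \mu_{Q_p}(A) \text{ is Borel}\}$ exhausts the Borel $\sigma$-algebra. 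Monotone convergence then upgrades this to Borel measurability of $G_{Q_0}f$ for every nonnegative Borel $f$.

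Finally, I would define the Borel probability measure $\nu(A) := \int_{Q_0^\perp}\mu_{Q_p}(A)\,d\gamma_{Q_0^\perp}(p)$ on $B$; Fubini combined with the character computation gives $\int_B e^{i(x,x^*)}\,d\nu(x) = e^{-\|h_{x^*}\|^2/2}$ for every $x^* \in B^*$, matching \ref{E:AWS}. Since characters separate Borel probability measures on the separable Banach space $B$, one concludes $\nu = \mu$, so \ref{E:disintformula} holds for indicators; simple functions, then monotone convergence for nonnegative Borel $f$, then linearity with dominated convergence for complex Borel $f$ with $\int |f|\,d\mu < \infty$ complete the argument. The principal obstacle is the measurability step: I would not attempt a multiplicative monotone class argument directly on the complex-valued characters, but rather the Dynkin reduction to cylinder sets described above, where the Gaussian structure of $\mu_{Q_p}$ along $B^*$-directions is explicit.
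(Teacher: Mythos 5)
Your treatment of the disintegration formula itself is essentially the paper's proof: both arguments compute the characteristic functional of the mixture measure $\nu(A)=\int_{Q_0^{\perp}}\mu_{Q_p}(A)\,d\gamma_{Q_0^{\perp}}(p)$ using \ref{E:mumpchar} together with $(p,x^*)=\langle p,P_{Q_0^{\perp}}h_{x^*}\rangle$ for $p\in Q_0^{\perp}$, obtain $e^{-\frac12\|h_{x^*}\|^2}$, and invoke uniqueness of Borel probability measures on a separable Banach space with a given characteristic functional; the passage from indicators to general $f$ is routine in both. Where you genuinely diverge is the measurability of $p\mapsto G_{Q_0}f(p)$. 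The paper dispatches this in one line via the translation identity \ref{E:mumpmumorel}: $G_{Q_0}f(p)=\int_B f(x+p)\,d\mu_{Q_0}(x)$, and since $(x,p)\mapsto f(x+p)$ is jointly Borel (Borel $f$ composed with continuous addition), Tonelli gives Borel measurability of the partial integral at once. Your Dynkin-class argument on cylinder sets also works, but is longer, and it contains one inaccuracy you should repair: $p\mapsto\mu_{Q_p}(C)$ need \emph{not} be continuous for every cylinder $C=\{x:((x,x_i^*))_{i=1}^n\in R\}$ with $R$ Borel, because the covariance matrix $\bigl(\langle P_{Q_0}h_{x_i^*},P_{Q_0}h_{x_j^*}\rangle\bigr)_{ij}$ can be degenerate (e.g.\ if some $h_{x_i^*}\in Q_0^{\perp}$, that coordinate is $\mu_{Q_p}$-a.s.\ the constant $\langle p,h_{x_i^*}\rangle$, and $p\mapsto 1_R(\langle p,h_{x_i^*}\rangle)$ is generally discontinuous). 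What survives, and is all your $\lambda$-system needs, is Borel measurability: $\mu_{Q_p}(C)=\gamma_{\Sigma}\bigl(R-m(p)\bigr)$ with $m(p)$ continuous in $p$ and $v\mapsto\gamma_{\Sigma}(R-v)$ Borel by a Fubini argument on the Borel set $\{(t,v):t+v\in R\}$ --- at which point you are implicitly using the same Fubini/translation device the paper uses globally. So the proposal is correct modulo replacing ``continuous'' by ``Borel'' in that step; the paper's route is the more economical one.
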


\begin{proof}
Suppose first that $f$ is a non-negative Borel function on $B$. Observe that:
	\begin{equation}
	G_{Q_0}f(p) = \int_B f(x + p) \,d\mu_{Q_0}(x)
	\end{equation}
for all $p \in Q_0^{\perp}$, so measurability of the map in \ref{E:GQ0} follows from Fubini's theorem.

To prove \ref{E:disintformula}, it suffices to show that the characteristic functional of the Borel probability measure $\mu'$ on $B$ given by:
	\begin{equation}
	\int_B g \,d\mu' \stackrel{\text{def}}{=} \int_{Q_0^{\perp}} \int_B g(x) \,d\mu_{Q_p}(x) \,d\gamma_{Q_0^{\perp}}(p)
	\end{equation}
for all bounded Borel functions $g$ on $B$, coincides with that of Wiener measure $\mu$. To see this, note that for all $x^* \in B^*$:
	\begin{eqnarray*}
	\int_B e^{i(x, x^*)}\,d\mu'(x) &=& \int_{Q_0^{\perp}} \int_B e^{i(x, x^*)}\,d\mu_{Q_p}(x) \,d\gamma_{Q_0^{\perp}}(p)\\
	&=& e^{-\frac{1}{2}\left\| P_{Q_0} h_{x^*} \right\|^2} \int_{Q_0^{\perp}} e^{i(p, x^*)}\,d\gamma_{Q_0^{\perp}}\\
	&=& e^{-\frac{1}{2}\left\| P_{Q_0} h_{x^*} \right\|^2} \int_{Q_0^{\perp}} e^{i\left< p, P_{Q_0^{\perp}}h_{x^*} \right>} \,d\gamma_{Q_0^{\perp}}(p)\\
	&=& e^{-\frac{1}{2}\left\| P_{Q_0} h_{x^*} \right\|^2} e^{-\frac{1}{2}\left\| P_{Q_0^{\perp}} h_{x^*} \right\|^2}\\
	&=& e^{-\frac{1}{2}\left\| h_{x^*} \right\|^2}
	\end{eqnarray*}
which proves our claim.
\end{proof}

Next, we explore some of the consequences of this result. First, suppose $f$ is a Borel function on $B$ such that $\|f\|_{L^r(B, \mu)} < \infty$ for some $1 \leq r < \infty$. Using $|f|^r$ in place of $f$ in \ref{E:disintformula}, we have:
	\begin{equation}
	\label{E:lr}
	|\!|f|\!|^r_{L^r(B, \mu)} = \int_{Q_0^{\perp}} |\!|f|\!|^r_{L^r(B, \mu_{Q_p})} \,d\gamma_{Q_0^{\perp}}(p) < \infty.
	\end{equation}
Consequently, the function $p \mapsto |\!|f|\!|_{L^r(B, \mu_{Q_p})}$ is $\gamma_{Q_0^{\perp}}$-almost everywhere finite on $Q_0^{\perp}$. Moreover, $G_{Q_0}f \in L^r(Q_0^{\perp}, \gamma_{Q_0^{\perp}})$:
	\begin{eqnarray*}
	\int_{Q_0^{\perp}} \left| G_{Q_0}f(p) \right|^r \,d\gamma_{Q_0^{\perp}}(p) &=& \int_{Q_0^{\perp}} \left| \int_B f \,d\mu_{Q_p} \right|^r \,d\gamma_{Q_0^{\perp}}(p) \\
	&\leq& \int_{Q_0^{\perp}} \left( \int_B |f|^r \,d\mu_{Q_p} \right) \,d\gamma_{Q_0^{\perp}}(p)\\
	&=& \int_B |f|^r \,d\mu < \infty.
	\end{eqnarray*}
	
Now for a fixed $h \in H$, let $\widetilde{h}$ be any representative of $Ih \in L^2(B, \mu)$ and for every $p \in Q_0^{\perp}$ let $h_{Q_p}$ be any representative of $I_{Q_p}h \in L^2(B, \mu_{Q_p})$. Let $\{y_{x^*_k}\}_{k \geq 1}$ be a sequence in $H_{B^*}$ converging to $h$ in $H$. Then:
	\begin{equation}
	\label{E:htilde}
	|\!|\widetilde{h} - x_k^*|\!|_{L^2(B, \mu)} \xrightarrow{k \rightarrow \infty} 0
	\end{equation}
and:
	\begin{equation}
	\label{E:hqp}
	|\!|h_{Q_p} - x^*_k|\!|_{L^2(B, \mu_{Q_p})} \xrightarrow{k \rightarrow \infty} 0 
	\end{equation}
for all $p \in Q_0^{\perp}$. From the disintegration formula in \ref{E:disintformula}:
	\begin{equation}
	|\!|\widetilde{h} - x^*_k|\!|^2_{L^2(B, \mu)} = \int_{Q_0^{\perp}} |\!|\widetilde{h} - x^*_k|\!|^2_{L^2(B, \mu_{Q_p})} \,d\gamma_{Q_0^{\perp}}(p),
	\end{equation}
so from \ref{E:htilde}:
	\begin{equation}
	\label{E:L2gamman}
	|\!|\widetilde{h} - x^*_k|\!|_{L^2(B, \mu_{Q_p})} \xrightarrow{k \rightarrow \infty} 0 \text{ in } L^2(Q_0^{\perp}, \gamma_{Q_0^{\perp}}).
	\end{equation}
But from \ref{E:hqp}:
	\begin{equation}
	\label{E:pwlimit}
	|\!|\widetilde{h} - x^*_k|\!|_{L^2(B, \mu_{Q_p})} \xrightarrow{k \rightarrow \infty} |\!|\widetilde{h} - h_{Q_p}|\!|_{L^2(B, \mu_{Q_p})}
	\end{equation}
for $\gamma_{Q_0^{\perp}}$-almost all $p \in Q_0^{\perp}$. Since pointwise almost everywhere limits and mean square limits coincide, \ref{E:L2gamman} and \ref{E:pwlimit} give us:
	\begin{equation}
	|\!|\widetilde{h} - h_{Q_p}|\!|_{L^2(B, \mu_{Q_p})} = 0 \text{ for } \gamma_{Q_0^{\perp}} \text{-almost all } p \in Q_0^{\perp}.
	\end{equation}
We make the remark that, in light of these results and some calculations needed in Section \ref{S:Inversion}, we would have liked to say something like ``$Ih = I_{Q_p}h$ for almost all $p$''. However, $L^2(B, \mu)$ and $L^2(B, \mu_{Q_p})$ are two completely different spaces whose elements are functions defined almost everywhere with respect to different measures. So for the sake of accuracy we summarize these findings as follows:

\begin{corollary}
\label{C:Lrhtilde}
Let $(H, B, \mu)$ be an abstract Wiener space and $Q_0$ be a closed subspace of finite codimension in $H$.
	\begin{enumerate}[(i). ]
	\item Let $[g] \in L^r(B, \mu)$ for some $1 \leq r < \infty$ and $f$ be any representative of $[g]$. Then: 
		\begin{equation}
		|\!|f|\!|_{L^r(B, \mu_{Q_p})} < \infty \text{ for almost all } p \in Q_0^{\perp}
		\end{equation}
	and 
		\begin{equation}
		\label{E:GqfinLr}
		G_{Q_0}f \in L^r(Q_0^{\perp}, \gamma_{Q_0^{\perp}}).
		\end{equation}
	In particular, if $f = 0$ $\mu$-almost everywhere for some measurable function $f$ on $B$, then $G_{Q_0}f(p) = 0$ for $\gamma_{Q_0^{\perp}}$-almost all $p \in Q_0^{\perp}$.
	\item Let $h \in H$ and $\widetilde{h}$ be any representative of $Ih$. Then $\widetilde{h}$ is a representative of $I_{Q_p}h$ for $\gamma_{Q_0^{\perp}}$-almost all $p \in Q_0^{\perp}$.
	\end{enumerate}
\end{corollary}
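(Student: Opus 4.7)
My plan is to derive both parts of the corollary as immediate consequences of the disintegration formula \ref{E:disintformula} in Theorem \ref{T:Disint}, organizing the computations already sketched in the discussion preceding the statement into a clean two-step argument. Part (i) is a Fubini-type calculation applied to $|f|^r$; part (ii) is a Paley--Wiener approximation argument that relies crucially on part (i).

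For part (i), I would substitute $|f|^r$ for $f$ in \ref{E:disintformula} to obtain
\[
\int_{Q_0^{\perp}} \|f\|_{L^r(B,\mu_{Q_p})}^r \, d\gamma_{Q_0^{\perp}}(p) \;=\; \|f\|_{L^r(B,\mu)}^r \;<\; \infty,
\]
which yields $\|f\|_{L^r(B,\mu_{Q_p})} < \infty$ for $\gamma_{Q_0^{\perp}}$-almost every $p$. Jensen's inequality then gives $|G_{Q_0}f(p)|^r \le \int_B |f|^r \, d\mu_{Q_p}$ pointwise in $p$, and integrating against $\gamma_{Q_0^{\perp}}$ and reapplying \ref{E:disintformula} produces $\|G_{Q_0}f\|_{L^r(Q_0^{\perp},\gamma_{Q_0^{\perp}})}^r \le \|f\|_{L^r(B,\mu)}^r$. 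The final clause follows from the $r=1$ case: if $f=0$ $\mu$-a.e., then $\|f\|_{L^1(B,\mu_{Q_p})} = 0$ for $\gamma_{Q_0^{\perp}}$-a.e. $p$, whence $G_{Q_0}f(p) = 0$ on that full-measure set.

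For part (ii), I would fix $h \in H$ and choose a sequence $\{h_{x_k^*}\}_{k\ge 1} \subset H_{B^*}$ with $h_{x_k^*} \to h$ in $H$. By the construction of the Paley--Wiener extensions $I$ and $I_{Q_p}$, the functions $(\cdot, x_k^*)$ converge to $\widetilde{h}$ in $L^2(B,\mu)$ and to any chosen representative $h_{Q_p}$ of $I_{Q_p}h$ in $L^2(B,\mu_{Q_p})$ for every $p \in Q_0^{\perp}$. Part (i) with $r=2$ guarantees that $\widetilde{h} \in L^2(B,\mu_{Q_p})$ for $\gamma_{Q_0^{\perp}}$-almost every $p$, so that the norms $\|\widetilde{h} - (\cdot, x_k^*)\|_{L^2(B,\mu_{Q_p})}$ are well defined on a full-measure set. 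Applying \ref{E:disintformula} to $|\widetilde{h} - (\cdot,x_k^*)|^2$ and letting $k \to \infty$ forces these norms to converge to zero in $L^2(Q_0^{\perp}, \gamma_{Q_0^{\perp}})$, and hence along a subsequence pointwise $\gamma_{Q_0^{\perp}}$-almost everywhere. Combining this with the $L^2(B,\mu_{Q_p})$-convergence $(\cdot, x_k^*) \to h_{Q_p}$ and the triangle inequality yields $\|\widetilde{h} - h_{Q_p}\|_{L^2(B,\mu_{Q_p})} = 0$ for such $p$, so that $\widetilde{h}$ is a representative of $I_{Q_p}h$.

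The main subtlety here is not computational but bookkeeping: $L^2(B,\mu)$ and $L^2(B,\mu_{Q_p})$ are built from different null sets, so one cannot a priori evaluate a representative of $Ih$ against $\mu_{Q_p}$. Part (i) is precisely what licenses treating a single fixed representative $\widetilde{h}$ of $Ih$ as simultaneously a representative of $I_{Q_p}h$ for $\gamma_{Q_0^{\perp}}$-almost every $p$, and this is why I carry out the two parts in the given order.
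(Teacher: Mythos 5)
Your proposal is correct and follows essentially the same route as the paper: part (i) by substituting $|f|^r$ into the disintegration formula \ref{E:disintformula} and applying Jensen's inequality, and part (ii) by approximating $h$ from $H_{B^*}$, disintegrating $|\widetilde{h}-(\cdot,x_k^*)|^2$, and reconciling the $L^2(Q_0^{\perp},\gamma_{Q_0^{\perp}})$ limit with the $\gamma_{Q_0^{\perp}}$-a.e.\ pointwise limit. Your explicit passage to a subsequence is a harmless variant of the paper's observation that mean-square and a.e.\ limits must coincide.
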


Another consequence of the disintegration theorem we explore is the relationship between $G_{Q_0}f$ and the conditional expectations of $f$.

\begin{corollary} \label{C:disint3}
Let $(H, B, \mu)$ be an abstract Wiener space, $f \in L^2(B, \mu)$ and $Q_0$ be a closed subspace of finite codimension in $H$. Let $\{u_1, \ldots, u_n\}$ be an orthonormal basis for $Q_0^{\perp}$ and for every $1 \leq k \leq n$ let $\widetilde{u_k}$ be any representative of $Iu_k$. Then:
	\begin{equation} \label{E:condexp}
	G_{Q_0}f\left( \widetilde{u_1}u_1 + \ldots + \widetilde{u_n}u_n \right) = \mathbb{E}\left[ f | \widetilde{u_1}, \ldots, \widetilde{u_n} \right]
	\end{equation}
\end{corollary}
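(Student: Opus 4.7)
The plan is to verify that the candidate
\begin{equation*}
F(x) := G_{Q_0}f\bigl(\widetilde{u_1}(x) u_1 + \cdots + \widetilde{u_n}(x) u_n\bigr)
\end{equation*}
satisfies the defining property of the conditional expectation $\mathbb{E}[f \mid \widetilde{u_1}, \ldots, \widetilde{u_n}]$. Since $G_{Q_0}f$ is Borel measurable on $Q_0^{\perp}$ by Theorem \ref{T:Disint}, $F$ is manifestly $\sigma(\widetilde{u_1}, \ldots, \widetilde{u_n})$-measurable. What remains is to show that for every bounded Borel $g \colon \mathbb{R}^n \to \mathbb{R}$,
\begin{equation*}
\int_B F \cdot g(\widetilde{u_1}, \ldots, \widetilde{u_n}) \, d\mu \;=\; \int_B f \cdot g(\widetilde{u_1}, \ldots, \widetilde{u_n}) \, d\mu.
\end{equation*}

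For the right-hand side I would apply the disintegration formula \eqref{E:disintformula} and then exploit the key fact that each $u_k$ lies in $Q_0^{\perp}$. By Corollary \ref{C:Lrhtilde}(ii), for $\gamma_{Q_0^{\perp}}$-almost every $p$ the function $\widetilde{u_k}$ is a representative of $I_{Q_p}u_k$; but observation (v) in Section \ref{S:GRT} shows $I_{Q_p}u_k$ is Gaussian under $\mu_{Q_p}$ with mean $\langle p, u_k\rangle$ and variance $\|P_{Q_0}u_k\|^2 = 0$. Hence $\widetilde{u_k}(x) = \langle p, u_k\rangle$ for $\mu_{Q_p}$-almost every $x$, and therefore
\begin{equation*}
\widetilde{u_1}(x) u_1 + \cdots + \widetilde{u_n}(x) u_n \;=\; \sum_{k=1}^n \langle p, u_k\rangle u_k \;=\; p \qquad \mu_{Q_p}\text{-a.e.}
\end{equation*}
Thus the inner integral collapses to $g\bigl(\langle p, u_1\rangle, \ldots, \langle p, u_n\rangle\bigr) \cdot G_{Q_0}f(p)$, so that the right-hand side equals $\int_{Q_0^{\perp}} g(\langle p, u_1\rangle, \ldots, \langle p, u_n\rangle) \, G_{Q_0}f(p) \, d\gamma_{Q_0^{\perp}}(p)$.

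For the left-hand side, the isometry \eqref{E:iso} together with orthonormality of the $u_k$ shows that $(\widetilde{u_1}, \ldots, \widetilde{u_n})$ is a standard Gaussian vector on $\mathbb{R}^n$ under $\mu$. Pushing forward along the isometric isomorphism $\mathbb{R}^n \to Q_0^{\perp}$, $(y_1,\ldots,y_n) \mapsto \sum_k y_k u_k$, which sends standard Gaussian measure $\gamma_n$ on $\mathbb{R}^n$ to $\gamma_{Q_0^{\perp}}$, I can rewrite the left-hand side as $\int_{Q_0^{\perp}} G_{Q_0}f(p) \cdot g(\langle p, u_1\rangle, \ldots, \langle p, u_n\rangle) \, d\gamma_{Q_0^{\perp}}(p)$, matching the right-hand side.

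The subtlest point is the manipulation with representatives: $\widetilde{u_k}$ is only defined $\mu$-almost everywhere, yet I need to evaluate it against the singular measure $\mu_{Q_p}$. Corollary \ref{C:Lrhtilde}(ii) is precisely what allows this, and the payoff is that the degeneracy of $\mu_{Q_p}$ along $Q_0^{\perp}$ turns $\widetilde{u_k}$ into the constant $\langle p, u_k\rangle$ under $\mu_{Q_p}$. Once this reduction is in hand the rest is routine bookkeeping via \eqref{E:disintformula} and the change of variables on $Q_0^{\perp}$.
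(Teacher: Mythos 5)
Your proof is correct and matches the paper's argument essentially step for step: both verify the defining property of conditional expectation by using the disintegration formula \eqref{E:disintformula}, the degeneracy observation that $\widetilde{u_k} = \langle p, u_k\rangle$ holds $\mu_{Q_p}$-a.e.\ for $\gamma_{Q_0^{\perp}}$-a.e.\ $p$ (via Corollary \ref{C:Lrhtilde}(ii)), and the fact that $\widetilde{u_1}u_1 + \cdots + \widetilde{u_n}u_n$ pushes $\mu$ forward to standard Gaussian measure on $Q_0^{\perp}$. The only cosmetic difference is that you test against bounded Borel $g$ while the paper tests against $g \in L^2$, which changes nothing of substance.
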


\noindent Note that if $u_k = h_{x_k^*}$ for some $x_k^* \in B^*$, then $\widetilde{u_k}$ may simply be replaced above by $x^*_k$, as there are no tricky convergence issues.

\begin{proof}
Recall that for $p \in Q_0^{\perp}$ and $1 \leq k \leq n$, any representative of $I_{Q_p}u_k$ is, with respect to $\mu_{Q_p}$, Gaussian with mean $\left<p, u_k\right>$ and variance $\left\| P_{Q_0} u_k \right\|^2 = 0$, therefore it is $\mu_{Q_p}$-almost everywhere equal to $\left< p, u_k \right>$. From Corollary \ref{C:Lrhtilde}:
	\begin{equation} \label{E:cor3eq1}
	\widetilde{u_k}(x) = \left<p, u_k\right> \text{ for } \mu_{Q_p}\text{-almost all } x \in B \text{, for } \gamma_{Q_0^{\perp}} \text{-almost all } p \in Q_0^{\perp}.
	\end{equation}
Let $g\left(\widetilde{u_1}, \ldots, \widetilde{u_n}\right)$ be an element of $L^2\left(B, \sigma\left(\widetilde{u_1}, \ldots, \widetilde{u_n}\right)\right)$, where $g : \mathbb{R}^n \rightarrow \mathbb{R}$ is Borel measurable. By \ref{E:disintformula} and \ref{E:cor3eq1}:
	\begin{eqnarray*}
	\int_B g\left(\widetilde{u_1}, \ldots, \widetilde{u_n}\right) f \,d\mu &=& \int_{Q_0^{\perp}} \left( \int_B g\left(\widetilde{u_1}, \ldots, \widetilde{u_n}\right) f \,d\mu_{Q_p} \right) \,d\gamma_{Q_0^{\perp}}(p)\\
	&=& \int_{Q_0^{\perp}}g\left(\left<u_1, p\right>, \ldots, \left<u_n, p\right>\right)G_{Q_0}f(p) \,d\gamma_{Q_0^{\perp}}(p)\\
	&=& \int_B g\left(\widetilde{u_1}, \ldots, \widetilde{u_n}\right) G_{Q_0}f\left( \widetilde{u_1}u_1 + \ldots + \widetilde{u_n}u_n\right)\,d\mu
	\end{eqnarray*}
where the last equality follows from the fact that the distribution measure of the map:
	\begin{equation} \label{E:proj}
	\widetilde{P_{Q_0^{\perp}}} : B \rightarrow Q_0^{\perp} \text{; } \widetilde{P_{Q_0^{\perp}}}(x) \stackrel{\text{def}}{=}  \widetilde{u_1}(x)u_1 + \ldots + \widetilde{u_n}(x)u_n
	\end{equation}
is exactly standard Gaussian measure on $Q_0^{\perp}$.
\end{proof}

\begin{remark}
The author is grateful to the referee for pointing out that the formula in \ref{E:condexp} is equivalent to the elegant formulation:
	\begin{equation}
	G_{Q_0}f \circ \widetilde{P_{Q_0^{\perp}}} = \mathbb{E} \left[ f | \widetilde{P_{Q_0^{\perp}}} \right].
	\end{equation}
\end{remark}

\begin{remark}
This result tells us that for all unit vectors $u \in H$ the conditional expectation of $f$ given $\widetilde{u}$ is exactly the Gaussian Radon transform:
	\begin{equation}
	Gf\left( pu + u^{\perp} \right) = \mathbb{E}\left[ f | \widetilde{u} = p \right]
	\end{equation}
for almost all $p \in \mathbb{R}$, where $Gf$ is defined as in \ref{E:gaussradon}. An inversion formula for the Gaussian Radon transform will then provide a way to recover $f$ from its conditional expectations.
\end{remark}

\section{The Inversion Formula}
\label{S:Inversion}

We obtain an inversion formula using the Segal-Bargmann transform for abstract Wiener spaces. This transform is well-known for finite-dimensional real Hilbert spaces: for any $f \in L^2(\mathbb{R}^n, \gamma_n)$, where $\gamma_n$ denotes standard Gaussian measure on $\mathbb{R}^n$, the Segal-Bargmann transform of $f$ is the function $Sf : \mathbb{C}^n \rightarrow \mathbb{C}$ defined for all $\alpha = (\alpha_1, \ldots, \alpha_n) \in \mathbb{C}^n$ by:
	\begin{equation}
	\label{E:SBRn}
	(Sf)(\alpha) = e^{-\frac{1}{2}\alpha\cdot \alpha} \int_{\mathbb{R}^n} e^{\alpha \cdot x} f(x) \,d\gamma_n(x)
	\end{equation}
where $\alpha \cdot \beta = \alpha_1\beta_1 + \ldots + \alpha_n\beta_n$ for all $\alpha, \beta \in \mathbb{C}^n$.  

In the case of an abstract Wiener space $(H, B, \mu)$, we work with the complexification $H_{\mathbb{C}} = H \oplus iH$ of $H$. Let $\mathcal{F}$ denote the collection of all finite-dimensional subspaces of $H_{\mathbb{C}}$ and for every $F \in \mathcal{F}$ let $\lambda_F$ denote the measure on $F$ given by:
	\begin{equation}
	\,d\lambda_F(z) = \frac{1}{\pi^n}e^{-\|z\|^2}\,dz
	\end{equation}
where $n = $ dim$(F)$ and $dz$ is Lebesgue measure. The Segal-Bargmann space over $H_{\mathbb{C}}$, denoted $\mathcal{H}L^2(H_{\mathbb{C}})$, is defined as the space of all holomorphic functions $g$ on $H_{\mathbb{C}}$ such that:
	\begin{equation}
	\sup_{F \in \mathcal{F}} \int_F |g(z)|^2 \,d\lambda_F(z) < \infty.
	\end{equation}
For every $g \in \mathcal{H}L^2(H_{\mathbb{C}})$ we let $\|g\|_{SB}$ denote the square root of the above supremum. This is a complete inner-product norm and $\mathcal{H}L^2(H_{\mathbb{C}})$ is a complex Hilbert space. For every $f \in L^2(B, \mu)$ we define the Segal-Bargmann transform $S_Bf: H_{\mathbb{C}} \rightarrow \mathbb{C}$ of $f$ as:
	\begin{equation}
	\left(S_Bf\right)(z) = e^{-\frac{1}{2}(z, z)} \int_B e^{\widetilde{z}(x)} f(x) \,d\mu(x)
	\end{equation}
where $(z, w)$ denotes the complex \textit{bilinear} extension of the inner product on $H$ for all $z, w \in H_{\mathbb{C}}$ and $\widetilde{z} = \widetilde{h} + i\widetilde{k}$ for all $z = h + ik \in H_{\mathbb{C}}$ with $h, k \in H$ and $\widetilde{h}$, $\widetilde{k}$ any representatives of $Ih$, $Ik$ respectively. The map $S_B : L^2(B, \mu) \rightarrow \mathcal{H}L^2(H_{\mathbb{C}})$ is unitary. For more details and proofs of these results, see \cite{Se1}, \cite{DrHall}, \cite{Hall} or \cite{DriverGordina}.

\begin{theorem} \label{T:inversion}
Let $(H, B, \mu)$ be an abstract Wiener space, $f \in L^2(B, \mu)$ and $Q_0$ be a closed subspace of $H$ of finite codimension. Let $\{u_1, \ldots, u_n\}$ be an orthonormal basis for $Q_0^{\perp}$. Then for all $\alpha = (\alpha_1, \ldots, \alpha_n) \in \mathbb{C}^n$:
	\begin{equation} \label{E:inversion}
	\left(S_{Q_0^{\perp}} \left( G_{Q_0}f \right)\right)\left(\alpha_{Q_0^{\perp}}\right) = \left( S_Bf \right)(\alpha_{Q_0^{\perp}})
	\end{equation}
where $\alpha_{Q_0^{\perp}} = \alpha_1 u_1 + \ldots + \alpha_n u_n \in \left( Q_0^{\perp} \right)_{\mathbb{C}}$ and $S_{Q_0^{\perp}}$ and $S_B$ are the Segal-Bargmann transforms on $L^2(Q_0^{\perp}, \gamma_{Q_0^{\perp}})$ and $L^2(B, \mu)$, respectively.
\end{theorem}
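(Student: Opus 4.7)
The plan is to compute both sides as integrals against $\gamma_{Q_0^\perp}$ over $Q_0^\perp$ and show the integrands agree. First I unfold the left side: identifying $Q_0^\perp$ with $\mathbb{R}^n$ via the orthonormal basis $\{u_1,\ldots,u_n\}$, the finite-dimensional Segal-Bargmann formula \ref{E:SBRn} gives
\begin{equation*}
\bigl(S_{Q_0^\perp}(G_{Q_0}f)\bigr)(\alpha_{Q_0^\perp}) \;=\; e^{-\frac{1}{2}(\alpha_{Q_0^\perp},\,\alpha_{Q_0^\perp})} \int_{Q_0^\perp} e^{(\alpha_{Q_0^\perp},\,p)} G_{Q_0}f(p)\,d\gamma_{Q_0^\perp}(p),
\end{equation*}
where I use that $\alpha \cdot \alpha = \sum \alpha_i^2 = (\alpha_{Q_0^\perp},\alpha_{Q_0^\perp})$ and $\alpha \cdot x = (\alpha_{Q_0^\perp},p)$ under the identification, with $(\cdot,\cdot)$ the bilinear extension of the $H$-inner product.

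Next I unfold the right side using the disintegration theorem. Writing $\alpha_{Q_0^\perp} = h + ik$ with $h,k \in Q_0^\perp$, the integrand $e^{\widetilde{\alpha_{Q_0^\perp}}}f = e^{\widetilde{h}+i\widetilde{k}}f$ lies in $L^1(B,\mu)$ by Cauchy-Schwarz (since $e^{\widetilde{h}}$ has all Gaussian moments and $f \in L^2$), so Theorem \ref{T:Disint} applies:
\begin{equation*}
\int_B e^{\widetilde{\alpha_{Q_0^\perp}}(x)} f(x)\,d\mu(x) \;=\; \int_{Q_0^\perp}\!\! \left(\int_B e^{\widetilde{\alpha_{Q_0^\perp}}(x)} f(x)\,d\mu_{Q_p}(x)\right) d\gamma_{Q_0^\perp}(p).
\end{equation*}

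The crux of the argument is to show the complex exponential can be pulled out of the inner integral as the constant $e^{(\alpha_{Q_0^\perp},p)}$. Since $h,k \in Q_0^\perp$, we have $P_{Q_0}h = P_{Q_0}k = 0$, so by the computation in observation (v) any representative of $I_{Q_p}h$ is $\mu_{Q_p}$-a.s.\ equal to the constant $\langle p,h\rangle$, and similarly for $k$. Combined with Corollary \ref{C:Lrhtilde}(ii), this gives that for $\gamma_{Q_0^\perp}$-almost every $p$,
\begin{equation*}
\widetilde{\alpha_{Q_0^\perp}}(x) \;=\; \widetilde{h}(x) + i\widetilde{k}(x) \;=\; \langle p,h\rangle + i\langle p,k\rangle \;=\; (\alpha_{Q_0^\perp},p) \quad \text{for } \mu_{Q_p}\text{-a.e. } x.
\end{equation*}
Therefore the inner integral equals $e^{(\alpha_{Q_0^\perp},p)} G_{Q_0}f(p)$, and multiplying by the prefactor $e^{-\frac{1}{2}(\alpha_{Q_0^\perp},\alpha_{Q_0^\perp})}$ matches the expression obtained for the left side, proving \ref{E:inversion}.

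The main obstacle I expect is bookkeeping with the complex bilinear extension and matching the finite-dimensional Segal-Bargmann inner products with the intrinsic $H$-inner product on $(Q_0^\perp)_\mathbb{C}$; everything else is a direct application of the disintegration theorem together with the degeneracy of $I_{Q_p}$ on vectors orthogonal to $Q_0$.
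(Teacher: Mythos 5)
Your proposal is correct and follows essentially the same route as the paper: both reduce the claim to the fact that for $\gamma_{Q_0^\perp}$-almost every $p$ the random variable $\widetilde{\alpha_{Q_0^\perp}}$ is $\mu_{Q_p}$-a.e.\ equal to the constant $(\alpha_{Q_0^\perp}, p)$ (the paper's \ref{E:inveq1}, derived from \ref{E:cor3eq1}), and then apply the disintegration formula \ref{E:disintformula} to collapse the double integral. Your explicit Cauchy--Schwarz check that $e^{\widetilde{\alpha_{Q_0^\perp}}}f \in L^1(B,\mu)$ before invoking Theorem \ref{T:Disint} is a small justification the paper leaves implicit.
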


\begin{proof}
From \ref{E:cor3eq1}:
	\begin{equation} \label{E:inveq1}
	\widetilde{\alpha_{Q_0^{\perp}}}(x) = \left( \alpha_{Q_0^{\perp}}, p \right) \text{ for } \mu_{Q_p}\text{-almost all } x \in B \text{, for } \gamma_{Q_0^{\perp}}\text{-almost all } p \in Q_0^{\perp}.
	\end{equation}
By Corollary \ref{C:Lrhtilde}, $G_{Q_0}f \in L^2(Q_0^{\perp}, \mu_{Q_0^{\perp}})$, so we may consider its Segal-Bargmann transform. From \ref{E:inveq1} and \ref{E:disintformula}:
	\begin{eqnarray*}
	\left(S_{Q_0^{\perp}}\left( G_{Q_0}f \right)\right)\left(\alpha_{Q_0^{\perp}}\right) &=& e^{-\frac{1}{2} (\alpha_{Q_0^{\perp}}, \alpha_{Q_0^{\perp}})} \int_{Q_0^{\perp}} e^{( \alpha_{Q_0^{\perp}}, p )} G_{Q_0}f(p) \,d\gamma_{Q_0^{\perp}}(p)\\
	&=& e^{-\frac{1}{2}( \alpha_{Q_0^{\perp}}, \alpha_{Q_0^{\perp}})}\int_{Q_0^{\perp}}\int_B e^{\widetilde{\alpha_{Q_0^{\perp}}}(x)} f(x) \,d\mu_{Q_p}(x) \,d\gamma_{Q_0^{\perp}}(p)\\
	&=& e^{-\frac{1}{2}(\alpha_{Q_0^{\perp}}, \alpha_{Q_0^{\perp}})}\int_B e^{\widetilde{\alpha_{Q_0^{\perp}}}(x)} f(x) \,d\mu(x)\\
	&=& \left(S_Bf\right)(\alpha_{Q_0^{\perp}}).
	\end{eqnarray*}
\end{proof}

As the referee pointed out, \ref{E:inversion} is equivalent to:
	\begin{equation}
	S_{Q_0^{\perp}}\left( G_{Q_0}f \right) = \left.\left(S_Bf\right)\right|_{(Q_0^{\perp})_{\mathbb{C}}}.
	\end{equation}
Now recall that any hyperplane in $H$ can be uniquely expressed as:
	\begin{equation}
	P(t, u) = tu + u^{\perp}
	\end{equation}
for $t > 0$ and $u$ a unit vector in $H$. The formula in \ref{E:inversion} yields that for all $h = \|h\|u$, where $u \in H$ is a unit vector, we have:
	\begin{equation}
	S\left[ Gf\left(P(\cdot, u)\right) \right]\left( \|h\| \right) = S_Bf(h)
	\end{equation}
where $S$ is the Segal-Bargmann transform on $\mathbb{R}$ and $Gf$ is the Gaussian Radon transform. Consequently, if we know $Gf(P)$ for all hyperplanes $P$ in $H$, then we know $S_Bf|_H$. Taking the holomorphic extension to $H_{\mathbb{C}}$, we know $S_Bf$ and can then obtain $f$ from the inverse Segal-Bargmann transform.

\section*{Acknowledgment}
The author acknowledges support from the US Department of Education GAANN grant P200A100080 and 
is deeply grateful to her adviser, Prof. Ambar Sengupta for his guidance.  This work is part of a project covered by grant NSA grant H98230-13-1-0210. The author is also grateful to the referee for an insightful and careful analysis of this paper.

\end{document}